
\documentclass[11pt]{article}
\usepackage{a4wide}
\usepackage{amsmath}

\usepackage{amsfonts}
\usepackage{amssymb}
\usepackage{euscript}
\parskip=2mm
\parindent=0mm
\newenvironment{proof}{\noindent {\it Proof.~~}\ }{\  \rule{1mm}{2mm}\medskip}

\newenvironment{proofof}[2]{\noindent {\it Proof of #1}~#2: \
}{~\rule{1mm}{2mm}\medskip}
\newtheorem{theorem}{Theorem}
\newtheorem{lemma}[theorem]{Lemma}
\newtheorem{corollary}[theorem]{Corollary}
\newtheorem{proposition}[theorem]{Proposition}

\newcommand{\subgp}[1]{\langle{#1}\rangle}

\begin{document}
\title{
  The global isoperimetric methodology applied to Kneser's Theorem}

\author{ Yahya O. Hamidoune\thanks{Universit\'e Pierre et Marie Curie,    Paris {\tt hamidoune@math.jussieu.fr} }
}

\maketitle

\begin{abstract} We give in the present work a new methodology
that allows to give isoperimetric proofs, for Kneser's Theorem and
 Kemperman's
structure Theory and most sophisticated results of this type. As an
illustration we present a new proof of Kneser's Theorem.
\end{abstract}

\section{Introduction}

A basic tool in Additive Number Theory is the following
generalization of the Cauchy-Davenport Theorem
\cite{cauchy,davenport} due to Kneser:

\begin{theorem}[Kneser \cite{manlivre,natlivre,tv}]\label{kneserl}
Let $G$ be an abelian group and let $A, B\subset G$ be finite
subsets such that $|A+B|\le |A|+|B|-2$. Then $A+B$ is periodic.
\end{theorem}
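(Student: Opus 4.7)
The plan is to analyze $A+B$ through the global isoperimetric parameter associated to $B$, namely
\[
\kappa(B) = \min\{|X+B|-|X| : \emptyset \neq X \subset G,\ X+B \neq G\},
\]
together with the family of sets (\emph{$B$-fragments}) that attain it. The hypothesis $|A+B|\le |A|+|B|-2$ says that $A$ is itself a candidate (once the setting is arranged so that $X+B\ne G$ is satisfied), hence $\kappa(B)\le |B|-2$, strictly below the trivial upper bound $|B|-1$ coming from singletons. This strict inequality is what will ultimately produce a non-trivial period.

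First I would normalize by translating so $0\in A\cap B$ and replacing $G$ by the subgroup generated by $A\cup B$; the stabilizer of $A+B$ is unaffected by this reduction. A standard manoeuvre (either passing to a finite quotient or verifying directly that $A$ is a witness) guarantees that the minimum defining $\kappa(B)$ is attained. The key structural ingredient is the submodularity
\[
(|X+B|-|X|)+(|Y+B|-|Y|)\ge (|(X\cup Y)+B|-|X\cup Y|)+(|(X\cap Y)+B|-|X\cap Y|),
\]
which follows from $|(X+B)\cup(Y+B)|=|(X\cup Y)+B|$ and $|(X+B)\cap(Y+B)|\ge |(X\cap Y)+B|$. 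Applied to two overlapping fragments, this forces both $X\cap Y$ and $X\cup Y$ to be fragments; combined with a minimal-cardinality argument, it yields the intersection property of \emph{atoms} (inclusion-minimal fragments).

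Once the intersection property is in hand, I would show that an atom $H$ containing $0$ is a subgroup: for any $h\in H$ the translate $H-h$ is again an atom through $0$, hence equal to $H$ by minimality, so $H-H\subseteq H$. Because $\kappa(B)\le|B|-2$ excludes the atom $\{0\}$, this subgroup is non-trivial. Finally I would transfer periodicity from the atom to $A+B$ itself: the set $A$ achieves $\kappa(B)$, and a short intersection argument against $H$ gives $A+H=A$, so $A+B$ is a union of $H$-cosets.

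The main obstacle I anticipate is not any single algebraic step but the \emph{bookkeeping} at the interface of the isoperimetric framework with the original problem: choosing the right ambient group so that $\kappa(B)$ is well-defined and attained, ensuring $A$ qualifies as a fragment rather than being excluded by the condition $X+B\ne G$, and verifying that the atom one extracts is genuinely tied to the stabilizer of $A+B$ and not to some parasitic periodic structure. This is precisely what a \emph{global} version of the isoperimetric method is designed to handle, and I would expect the paper's main effort to be spent there.
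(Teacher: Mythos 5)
Your setup is sound and matches the paper's preliminaries: the connectivity $\kappa_1(B)=\min\{|X+B|-|X|\}$, the submodularity/intersection property of fragments, the fact that an atom through $0$ is a subgroup, and the observation that $\kappa_1(B)\le|B|-2$ forces that subgroup to be non-trivial are all exactly Theorem \ref{inter2frag} and Proposition \ref{Cay}. The gap is in the last step, and it is not bookkeeping — it is the entire content of the theorem. You write that ``the set $A$ achieves $\kappa(B)$'' and that an intersection argument then gives $A+H=A$. But the hypothesis $|A+B|\le|A|+|B|-2$ only shows that $A$ is a \emph{witness} to $\kappa_1(B)\le|B|-2$; it does not make $A$ a minimizer, i.e.\ a $1$--fragment. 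The intersection theorem applies only to fragments, so you cannot conclude that the atom-translates meeting $A$ are contained in $A$. Worse, the conclusion you are reaching for is false in general: in Kneser's theorem the period of $A+B$ need not stabilize $A$ (that is precisely why the popular form involves $|A+H|$ and $|B+H|$ rather than $|A|$ and $|B|$), and the period need not contain the $1$--atom $H$ of $B$ either.

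This is exactly where the paper's proof spends all of its effort. After extracting the subgroup atom $H$ of the smaller set $S$, it does \emph{not} claim $T$ is a fragment; instead it argues by induction on $|S|+|T|$, decomposes $S$ and $T$ into $H$--cosets, passes to $G/H$ where Lemma \ref{quotient} and the Menger-based strong isoperimetric property (Proposition \ref{strongip}) control how many coset-sums $T_i+S_{mi}$ must appear in $S+T$, proves the bound $|S+T|\le\frac{2|G|-2}{3}$ and Claims 2--4 to pin down which cosets of $S+T$ are full, and finally applies the induction hypothesis to a single small pair $T_o+S_u$ inside one $H$--coset to produce the period $N$, which is a (possibly proper, non-trivial) subgroup of $H$. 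To repair your argument you would need some mechanism of this kind — either an induction that reduces to the case where the witness really is a fragment, or a direct analysis of the $H$--coset structure of $A+B$; the claim ``$A$ is a fragment, hence $H$--periodic'' cannot be the mechanism.
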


The above compact form of Kneser's Theorem implies easily the
following popular form of this theorem:

\begin{corollary}[Kneser \cite{manlivre,natlivre,tv}]\label{kneser}
Let $G$ be an abelian group and let $A, B\subset G$ be finite
subsets. Then $|A+B|\ge |A+H|+|B+H|-|H|,$ where $H$ is the period of
$A+B$.
\end{corollary}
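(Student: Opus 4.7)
The plan is to reduce the inequality to Kneser's Theorem (Theorem \ref{kneserl}) applied in the quotient group $G/H$, where $H$ is the period of $A+B$.

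First I would consider the canonical projection $\pi : G \to G/H$ and set $\overline{A} := \pi(A+H)$, $\overline{B} := \pi(B+H)$. Since $A+B$ is $H$-periodic by definition of $H$, we have $A+B = \pi^{-1}(\overline{A}+\overline{B})$, and hence the two identities
\begin{align*}
|A+B| &= |H| \cdot |\overline{A}+\overline{B}|, \\
|A+H| &= |H| \cdot |\overline{A}|, \qquad |B+H| = |H| \cdot |\overline{B}|.
\end{align*}
Dividing the desired inequality through by $|H|$, the claim becomes
$$|\overline{A}+\overline{B}| \ge |\overline{A}| + |\overline{B}| - 1 \qquad \text{in } G/H.$$

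Next, I would check that the period (stabilizer) of $\overline{A}+\overline{B}$ inside $G/H$ is trivial. Indeed, if $\overline{K} \le G/H$ stabilizes $\overline{A}+\overline{B}$, then its preimage $K = \pi^{-1}(\overline{K}) \supset H$ stabilizes $\pi^{-1}(\overline{A}+\overline{B}) = A+B$; by the maximality of $H$ as the period of $A+B$, this forces $K=H$, i.e.\ $\overline{K}$ is trivial.

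Finally I would apply Theorem \ref{kneserl} in the group $G/H$ to the finite sets $\overline{A},\overline{B}$: if we had $|\overline{A}+\overline{B}| \le |\overline{A}| + |\overline{B}| - 2$, then $\overline{A}+\overline{B}$ would be periodic, contradicting the triviality of its period established above. Thus $|\overline{A}+\overline{B}| \ge |\overline{A}| + |\overline{B}| - 1$, which, after multiplying by $|H|$, yields the required bound $|A+B| \ge |A+H|+|B+H|-|H|$. There is no real obstacle here; the only subtle point is the verification that passing to the quotient kills the stabilizer, and this is a direct consequence of the definition of $H$ as the \emph{full} period of $A+B$.
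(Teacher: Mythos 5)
Your proof is correct and is precisely the standard reduction the paper has in mind when it says the compact form ``implies easily'' the popular form (the paper itself gives no written proof of this corollary). Passing to $G/H$, observing that the stabilizer of $\overline{A}+\overline{B}$ must be trivial by maximality of the period $H$, and then invoking Theorem \ref{kneserl} in the quotient to rule out $|\overline{A}+\overline{B}|\le|\overline{A}|+|\overline{B}|-2$ is exactly the intended argument.
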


Proofs of this result based on  the additive local  transformations
introduced by Cauchy and  Davenport \cite{cauchy,davenport} are
contained in \cite{manlivre,natlivre,tv}.

Recently the author introduced the isoperimetric method allowing to
derive additive inequalities from global properties of the fragments
and atoms (subsets where the objective function $|A+B|-|A|$ achieves
its minimal non trivial value).

This method can be applied to abstract graphs and non abelian groups
and have implications that could not be derived  using the local
transformations. However in the abelian case, it was not clear how
to derive the Kneser-Kemerman's Theory from the isopermetric method.

Very recently Balandraud introduced some isoperimetric objects and
proposed a proof, requiring several pages, of Kneser's Theorem using
as a first step our result that the $1$-atom containing $0$ is a
subgroup.

The purpose of the present paper is not to give a short proof of
Kneser's Theorem. Each of the direct proofs contained in
\cite{manlivre,tv} is quite short and requires around three pages.
However the present proof gives more light on  on the isoperimetric
nature of  Kneser's Theorem and shows that it follows from the
fundamental property of the $1$--atoms.

More interesting than this new proof is the methodology which could
be applied in  the following contexts:
 \begin{itemize}
 \item
It will be used in a coming paper to Kemperman's structure
Theorem \cite{kempacta} and its critical pair Theorem proved
recently by Grynkiewicz \cite{davdecomp}, producing considerable
simplifications.
\item Quite likely this method could be applied to solve the open question  concerning the   description
for subsets $A,B$ with $|A+B|=|A|+|B|+m$, for some values of
$m>0$.
\item This method is purely combinatorial and could be adapted to non
abelian groups. Indeed the major part of the arguments of this
paper holds for non abelian groups.
 \end{itemize}

\section{Terminology and preliminaries}

\subsection{Groups}
Let $G$ denotes an abelian group.  The subgroup generated by a
subset $S$ will be denoted by $\subgp{S}$. Let  $ A,B$ be subsets of
$ G $. The {\em Minkowski sum} is defined as
$$A+B=\{x+y \ : \ x\in A\  \mbox{and}\ y\in
  B\}.$$

For an element $x\in G$, we write $r_{A,B}(x)=|(x-B)\cap A|$. Notice
that $r_{A,B}(x)$ is the number of distinct representations of $x$
as a sum of an element of $A$ and an element of $B$.

We use the following well known fact:
\begin{lemma}\cite{natlivre}
Let $G$ be a finite group and let
 $A,B$  be  subsets
 such that $|A|+|B|\ge |G|+t$.
 Then  $r_{A,B}(x)\ge t$.

\label{prehistorical}
 \end{lemma}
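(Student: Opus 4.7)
The plan is to reduce the statement to a single application of the pigeonhole principle (equivalently, the finite-group version of inclusion--exclusion). By the definition given just above the lemma, $r_{A,B}(x) = |A \cap (x-B)|$, so the task is to bound the size of this intersection from below.

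First I would observe that the map $b \mapsto x-b$ is a bijection of $G$ onto itself, hence $|x-B| = |B|$. Both $A$ and $x-B$ are subsets of the finite ambient group $G$, and for any two subsets $U,V \subseteq G$ one has $|U \cap V| \ge |U|+|V|-|G|$ (since $|U \cup V| \le |G|$ together with $|U \cap V| = |U|+|V|-|U \cup V|$). Applying this with $U=A$ and $V = x-B$ yields
\[
r_{A,B}(x) \;=\; |A \cap (x-B)| \;\ge\; |A| + |x-B| - |G| \;=\; |A|+|B|-|G| \;\ge\; t,
\]
which is exactly the claim. There is no real obstacle here: the whole content of the lemma is the translation bijection plus the trivial inclusion--exclusion bound, and this is precisely why it is labelled ``well known.''
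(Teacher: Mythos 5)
Your proof is correct and is the standard argument for this classical fact; the paper itself gives no proof, simply citing Nathanson, and the intended argument is exactly the one you give (translation bijection $b\mapsto x-b$ plus the bound $|U\cap V|\ge |U|+|V|-|G|$).
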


Let  $H$ be a subgroup. A partition  $A=\bigcup \limits_{i\in I}
A_i,$ where $A_i$ is the nonempty intersection of some $H$--coset
with $A$ will be called
 a $H$--{\em decomposition}
of $A$.


\subsection{The strong isoperimetric property}

 Let $V$ be a set and let $E \subset V\times V$.  The relation
$\Gamma = (V,E)$ will be called  a  {\em graph}.
 An element of  $V$
will be called a {\em point} or a {\em vertex}. The graph $\Gamma$
is said to be {\em reflexive} if $(x,x)\in E,$ for all $x$. We shall
write
$$\partial (X)=\Gamma (X)\setminus X.$$

A  {\em  path} of $\Gamma$ from $x_1$ to $x_k$ is a sequence $\mu
=[x_1, \cdots ,x_k]$ of pairwise distinct points (where $k\ge 1$)
such that $(x_i,x_{i+1})\in E$, for all $1\le i \le k-1.$ The set of
points of $\mu$ is by definition $P(\mu)=\{x_1, \cdots, x_k\}$. Our
 paths are called elementary paths in some Graph Theory books.

A family $\mu _1, \cdots , \mu _k$ of paths from $x$ to $y$ will be
called openly disjoint if  $P(\mu _i)\cap P( \mu _j)=\{x,y\}$ for
all $i,j$ with $i\neq j$.

Let $\Gamma =(V,E)$ be a locally finite graph with $|V|\ge 1.$
 The {\em $1$--connectivity}
of $\Gamma$
 is defined  as

\begin{equation}
\kappa _1 (\Gamma )=\min  \{|\partial (X)|\   :  \ \
\infty >|X|\geq 1 \ {\rm and}\ |X \cup  \Gamma(X)|\le |V|-1\},
\label{eqcon}
\end{equation}
where $\min \emptyset =|V|-1$.

Let $G$ be a group, written additively, and let $S$  be a subset of
$G$. The graph $(G,E),$ where $ E=\{ (x,y) : -x+y \ \in S \}$ is
called a {\it Cayley graph}. It will be denoted by $\mbox{Cay}
(G,S)$.

Let $\Gamma =\mbox{Cay} (G,S)$   and  let   $F \subset G $. Clearly
 $\Gamma (F)=F+S $.

A general formalism, including the most recent terminology  of the
isoperimetric method, may be found in a the recent paper
\cite{hiso2007}.

Let $x,y$ be elements of $V$. We shall say that $y$ is {\em
$(k-1)$--nonseparable} to $x$ in $\Gamma$ if $ |\partial (A)|\ge k$,
for every subset $A$ with $x\in A$ and $y\notin \Gamma (A)$.

 We shall formulate  Menger's Theorem (the general form of this result is due to Dirac) which is a basic fact from Graph Theory.
It has applications in Additive number Theory \cite{natlivre,tv}.

\begin{theorem} ( Dirac-Menger)\cite{natlivre,tv} \label{menger}

Let $\Gamma=(V,E)$ be a finite reflexive graph Let $k$ be a
nonnegative integer. Let $x,y\in V$ such that $y$ is
$(k-1)$--nonseparable from $x$, and $(x,y)\notin E$. Then there are
$k$ openly disjoint paths from $x$ to $y$.
\end{theorem}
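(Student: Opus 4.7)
The plan is to prove Menger's theorem by induction on $|E|$, following Dirac's classical argument. The base cases $k=0$ (vacuous) and $k=1$ reduce to reachability: repeatedly enlarging $A=\{x\}$ to $\Gamma(A)$ must eventually catch $y$, else the stable set $A$ witnesses $\partial(A)=\emptyset$ against $0$-nonseparability; tracing back along the enlargement yields an $xy$-path.

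For the inductive step, fix a minimum $xy$-separator $S=\partial(A)$ with $x\in A$, $y\notin\Gamma(A)$, and $|S|=k$. Case~1: some such $S$ is neither $\partial(\{x\})$ nor the set of in-neighbors of $y$. Then split $\Gamma$ into two smaller reflexive graphs $\Gamma_1$, induced on $A\cup S$ with a new sink $y'$ attached from all of $S$; and $\Gamma_2$, induced on $V\setminus A$ with a new source $x'$ attached to all of $S$. The Case~1 hypothesis guarantees both have strictly fewer edges than $\Gamma$, and in each the new endpoint inherits $(k-1)$-nonseparability from the minimality of $S$. Induction yields $k$ openly disjoint $xy'$-paths in $\Gamma_1$ and $x'y$-paths in $\Gamma_2$; by the min-cut property each vertex of $S$ lies on exactly one path from each side, so the two families concatenate along $S$ into $k$ openly disjoint $xy$-paths of $\Gamma$.

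Case~2: every minimum $xy$-separator equals one of those two trivial cuts. Since $(x,y)\notin E$, some $xy$-path has length $\geq 3$ (if all had length $2$, each intermediate vertex would be a $1$-separator, contradicting $k\geq 2$). Pick an internal edge $(u,v)$ on such a path with $u\neq x$ and $v\neq y$, and delete it to form $\Gamma'$. If $y$ is still $(k-1)$-nonseparable from $x$ in $\Gamma'$, apply induction to $\Gamma'$ and lift the paths. Otherwise some set $A'$ satisfies $x\in A'$, $y\notin\Gamma'(A')$, and $|\partial_{\Gamma'}(A')|\leq k-1$; a direct check shows $A'$ is then a minimum $xy$-separator in $\Gamma$ whose boundary contains $v$ and avoids both $\partial(\{x\})$ and the in-neighborhood of $y$, contradicting the Case~2 hypothesis.

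The main obstacle I anticipate is the bookkeeping in Case~1: defining the auxiliary graphs $\Gamma_1,\Gamma_2$ precisely in the reflexive and directed setting, and verifying that $(k-1)$-nonseparability passes to each of them. This is finicky but routine, and turns entirely on the minimality of $S$, together with the trivial observation that any $xy$-path in $\Gamma$ decomposes uniquely along $S$ into an initial segment in $\Gamma_1$ and a final segment in $\Gamma_2$.
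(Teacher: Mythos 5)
Your route is the classical Dirac argument (split the graph at a non-trivial minimum separator, and when only trivial separators exist, delete an internal arc), which is genuinely different from the paper's proof (the paper first disposes of common neighbours of $x$ and $y$, then contracts a $k$--part $F$ and its dual $F^\curlywedge$ and composes two fans). However, your Case~2 contains a genuine gap. The claim ``some $xy$--path has length $\ge 3$, since if all had length $2$ each intermediate vertex would be a $1$--separator'' is false: take $V=\{x,z_1,z_2,y\}$ with all loops and the arcs $(x,z_1),(x,z_2),(z_1,y),(z_2,y)$. Here $y$ is $1$--nonseparable from $x$ (so $k=2$), the unique minimum separator is $\{z_1,z_2\}=\partial(\{x\})=\Gamma^{-1}(y)\setminus\{y\}$, so you are in Case~2, yet every $xy$--path has length $2$ and no single $z_i$ separates. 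In such a situation there is no internal arc $(u,v)$ with $u\ne x$ and $v\ne y$ to delete, and your induction cannot proceed. The missing ingredient is precisely the paper's opening move: if $z\in\Gamma(x)\cap\Gamma^{-1}(y)$, delete $z$, observe that $y$ is $(k-2)$--nonseparable from $x$ in the restriction, apply induction to get $k-1$ paths, and adjoin $[x,z,y]$. Once $\Gamma(x)\cap\Gamma^{-1}(y)=\emptyset$ is secured, every path does have length $\ge 3$, and this hypothesis is also what makes the final contradiction of Case~2 work.

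Two smaller points you should tighten. First, in the concluding step of Case~2 the boundary $\partial_\Gamma(A')=S'\cup\{v\}$ need not ``avoid both'' trivial cuts: the correct classical conclusion, using a \emph{shortest} path so that $u\in\Gamma(x)\setminus\Gamma^{-1}(y)$ and $v\notin\Gamma(x)$, is that $S'\cup\{u\}=\partial(\{x\})$ and $S'\cup\{v\}=\Gamma^{-1}(y)\setminus\{y\}$, whence $S'\subseteq\Gamma(x)\cap\Gamma^{-1}(y)=\emptyset$ and $k=1$, a case already handled. Second, since you induct on $|E|$ while $\Gamma_1$ and $\Gamma_2$ gain $k+1$ new arcs each, the strict decrease of the edge count is not automatic; it needs the observation that $y$ has at least $k$ proper in-neighbours (all of whose arcs into $y$, together with the loop at $y$ and at least one further vertex outside $A\cup S$, disappear from $\Gamma_1$). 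You flag this as ``finicky but routine,'' which is fair, but it does need the in-degree bound to close.
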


One may formulate Menger's Theorem for non reflexive graphs. Such a
formulation is slightly more complicated and follows easily from the
reflexive case. We shall give an isoperimetric short proof of this
result in the appendix.

We need the following consequence of Menger's Theorem:

\begin{proposition} { Let $\Gamma $ be a locally finite   reflexive
graph and let $k$ be a nonnegative integer with
 $k\le \kappa _1$. Let   $X$  a
finite subset of $V$ such that $\min (|V|-|X|, |X|)\ge k.$ There are
pairwise distinct elements
 $x_1, x_2, \cdots, x_{k} \in X$ and pairwise distinct elements
 $y_1, y_2, \cdots, y_{k} \notin X$ such that
 \begin{itemize}
   \item $(x_1,y_1), \cdots , (x_{k}, y_{k})\in E,$
    \item $|X\cup \{y_1, \cdots ,  y_{k}\}|=|X|+k$,
 \end{itemize}

\label{strongip}}
\end{proposition}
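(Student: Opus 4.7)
The statement is bipartite matching in disguise: we want $k$ pairwise vertex-disjoint edges of $\Gamma$ from $X$ to $V\setminus X$, and any such edge automatically lands in $\partial(X)=\Gamma(X)\setminus X$ on the other side. The plan is to build a finite reflexive auxiliary graph in which the openly disjoint $s$--$t$ paths are forced to be exactly matching edges between $X$ and $\partial(X)$, and then invoke Theorem~\ref{menger}.

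Let $s,t$ be two new symbols and put $V^{*}=X\cup\partial(X)\cup\{s,t\}$, which is finite because $\Gamma$ is locally finite and $X$ is finite. Let $E^{*}$ consist of a loop at every point of $V^{*}$ together with the pairs $(s,x)$ for $x\in X$, the pairs $(y,t)$ for $y\in\partial(X)$, and the pairs $(x,y)$ whenever $x\in X$, $y\in\partial(X)$ and $(x,y)\in E$. Then $\Gamma^{*}=(V^{*},E^{*})$ is finite and reflexive, and by design every path from $s$ to $t$ has the form $s,x,y,t$ with $x\in X$, $y\in\partial(X)$ and $(x,y)\in E$. Consequently, $k$ openly disjoint such paths immediately deliver the pairs $(x_i,y_i)$ required by the proposition, the second bullet being automatic since the $y_i$ are distinct and lie in $V\setminus X$.

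The core step is to show that $t$ is $(k-1)$-nonseparable from $s$ in $\Gamma^{*}$. Let $A^{*}\ni s$ satisfy $t\notin\Gamma^{*}(A^{*})$; the latter forces $A^{*}\cap\partial(X)=\emptyset$, so writing $A=A^{*}\cap X$ a direct expansion gives
\[
\partial_{\Gamma^{*}}(A^{*})=(X\setminus A)\cup\bigl(\Gamma(A)\cap\partial(X)\bigr),
\]
hence $|\partial_{\Gamma^{*}}(A^{*})|=|X|-|A|+|\Gamma(A)\cap\partial(X)|$. If $A=\emptyset$ this is $|X|\ge k$. Otherwise, splitting $\partial_{\Gamma}(A)$ into its parts in $X\setminus A$ and in $\partial(X)$ (valid since $\Gamma(A)\setminus X\subseteq\partial(X)$ for $A\subseteq X$) and bounding the first piece trivially by $|X\setminus A|$, one gets $|\partial_{\Gamma^{*}}(A^{*})|\ge|\partial_{\Gamma}(A)|$. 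The main obstacle is then the remaining case split: if $A\cup\Gamma(A)\ne V$, the set $A$ is admissible in~(\ref{eqcon}) and so $|\partial_{\Gamma}(A)|\ge\kappa_{1}\ge k$; if instead $A\cup\Gamma(A)=V$, then $|\partial_{\Gamma}(A)|=|V|-|A|\ge|V|-|X|\ge k$, this being the one place where the hypothesis $|V|-|X|\ge k$ is used. In either situation the nonseparability bound holds, so Theorem~\ref{menger} supplies the required openly disjoint paths and the proof is complete.
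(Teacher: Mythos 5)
Your proof is correct and follows essentially the same route as the paper: an auxiliary source--sink graph on $X\cup\partial(X)$, a verification of $(k-1)$-nonseparability split into the cases $A=\emptyset$, $A$ admissible in the definition of $\kappa_1$, and $\Gamma(A)=V$ (where $|V|-|X|\ge k$ is used), followed by an application of Theorem~\ref{menger}. The only (harmless) difference is that you keep only the crossing edges from $X$ to $\partial(X)$, so each $s$--$t$ path is forced to be a single matching edge, whereas the paper retains the restriction of $\Gamma$ to $\Gamma(X)$ and then extracts from each path its last vertex in $X$ together with its successor.
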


\begin{proof}
By the definition of $\kappa _1$, we have $|\partial (Y)|\ge \min
(|V|-|Y|,\kappa _1)\ge k ,$ for every $Y\subset V$.
 Let $\Phi=(\Gamma(X),E')$ be
the restriction of $\Gamma$ to $\Gamma (X)$ (observe that $X\subset
\Gamma (X)$). Choose two elements $a,b\notin V.$ Let $\Psi$ be the
reflexive graph obtained by connecting $a$ to $X\cup \{a\}$ and
$\partial (X)\cup \{b\}$ to $b$.  We shall show that $b$ is {\em
$(k-1)$--nonseparable} from $a$ in $\Psi$. Take $a\in T$ such that
$b\notin \Psi (T)$. Then clearly $T\subset X\cup \{a\}. $ Assume
first  $T=\{a\}$. Then $|\Psi (T)|-|T|=|X\cup \{a\}|-1\ge k.$ Assume
now  $T\cap X\neq \emptyset$. We have  $\Psi (T)= X\cup \{a\}\cup
\Gamma (T\cap X)$.  Therefore

\begin{eqnarray*}
 |\Psi (T)|&\ge& 1+|X|
+|\Gamma  (T\cap X) \setminus X|\ge
1+|X|+
(|T\cap X|+k- |X|)>k.
\end{eqnarray*}

By Menger's Theorem there  are $P_1, \cdots , P_{k }$ openly
disjoint paths from $a$ to $b$. Choose $x_i$ as the last point of
the path $P_i$ belonging to $X$ and let $y_i$ the successor of $x_i$
on the path $P_i$. This choice satisfies the requirements of the
proposition.
\end{proof}

We call the property given in Proposition  \ref{strongip}  the {\em
strong isoperimetric property}.

\section{Isoperimetric preliminaries}

The isoperimetric method is usually developed in the context of
graphs. We need in the present work only the special case of Cayley
graphs on abelian groups that we shall identify with group subsets.

Throughout all this section, $S$ denotes a finite generating subset
of an abelian group $G$, with $0\in S$.

 For a subset $X$, we put $\partial _S(X)=(X+S)\setminus X$ and $X^S=G\setminus (X+S)$.

\begin{lemma}\cite{balart,hiso2007}{Let  $X$ be a
 subset of $G$. Then $(X^S)^{-S}+S=X+S$. \label{dualitys}}
\end{lemma}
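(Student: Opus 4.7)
The plan is to unwind both complement operations and reinterpret $(X^S)^{-S}$ intrinsically. By definition $X^S = G \setminus (X+S)$, and applying the same construction with $-S$ in place of $S$ gives $(X^S)^{-S} = G \setminus \bigl((G \setminus (X+S)) + (-S)\bigr)$. An element $y$ lies in $(G \setminus (X+S)) + (-S)$ iff there exists $s \in S$ with $y+s \notin X+S$, so
\[
(X^S)^{-S} \;=\; \{\, y \in G : y + S \subseteq X + S \,\}.
\]
This is the key characterization, and once it is in hand the rest is essentially formal.

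From this characterization the inclusion $(X^S)^{-S} + S \subseteq X + S$ is immediate: any $y$ in the left-hand factor satisfies $y + S \subseteq X + S$ by definition, so adding any $s \in S$ keeps us in $X+S$.

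For the reverse inclusion I would observe that $X \subseteq (X^S)^{-S}$, since for $x \in X$ we plainly have $x + S \subseteq X + S$. Consequently $X + S \subseteq (X^S)^{-S} + S$, which completes the proof. (Note that the hypothesis $0 \in S$ is not even needed here, although it is part of the running assumption; it just makes $X \subseteq X+S$ automatic.)

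The only potential pitfall is bookkeeping: one must remember that the outer operation is $(\cdot)^{-S}$, not $(\cdot)^{S}$, so that the inner Minkowski sum is with $-S$ and the cancellation $y + s \notin X + S$ correctly translates to $y \in (G \setminus (X+S)) - S$. With that care the argument is a direct double-complement calculation and no deeper structural input (such as Menger's theorem) is required.
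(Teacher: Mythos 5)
Your proof is correct. Note that the paper itself gives no proof of this lemma --- it is stated with citations to Balandraud and to \cite{hiso2007}, where the result is proved and generalized --- so there is no in-text argument to compare against. Your double-complement computation is the standard one: the characterization $(X^S)^{-S}=\{\,y\in G : y+S\subseteq X+S\,\}$ is exactly right (and is the content of the isoperimetric duality), the inclusion $(X^S)^{-S}+S\subseteq X+S$ follows immediately from it, and the reverse inclusion follows from $X\subseteq (X^S)^{-S}$. Your parenthetical observations are also accurate: neither $0\in S$ nor finiteness of $X$ or $S$ is needed, and the only real care required is that the outer operation uses $-S$, which you handle correctly. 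The argument is complete and self-contained.
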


The last lemma is proved in  Balandraud \cite{balart} and
generalized in \cite{hiso2007}.

 The {\em $1$--connectivity}
of $S$
 is defined  as $\kappa _1 (S )=\kappa _1(\mbox{Cay}(G,S))$. By the
 definitions we have

 \begin{equation}
\kappa _1 (S )=\min  \{|\partial (X)|\   :  \ \
\infty >|X|\geq 1 \ {\rm and}\ |X+S|\le |G|-1\},
\label{eqcon}
\end{equation}
where $\min \emptyset =|G|-1$.

 A finite subset $X$ of $G$ such that $|X|\ge 1$,
$|G\setminus (X+S)|\ge 1$ and $|\partial (X)|=\kappa _1(S)$ is
called a {\em $1$--fragment} of $S$. A $1$--fragment with minimum
cardinality is called a {\em $1$--atom}. The cardinality of a
$1$--atom of $S$  will be denoted by $\alpha_1(S)$.

If $S=G$,   a {\em $1$--fragment} (resp. {\em $1$--atom})
 is just a set with cardinality  $1.$

 These notions, are particular cases  some concepts in
\cite{hcras, Hejcvosp1,halgebra,hactaa,hiso2007}. The reader may
find all basic facts from the isoperimetric method in the recent
paper \cite{hiso2007}.


Notice that
    $\kappa _1 (S)$ is the maximal integer $j$
such that for every finite nonempty subset $X\subset G$

\begin{equation}
|X+S|\geq \min \Big(|G|,|X|+j\Big).
\label{eqisoper0}
\end{equation}

Formulae (\ref{eqisoper0}) is an immediate consequence of the
definitions. We shall call (\ref{eqisoper0}) the {\em isoperimetric
inequality}. The reader may use the conclusion of this lemma as a
definition of $\kappa _1 (S)$. Since  $|\partial (\{0\})|\le \kappa
_1$, we  have:
\begin{equation}\label{bound}
\kappa _1(S)\le |S|-1.
\end{equation}

The basic intersection theorem is the following:

\begin{theorem}\cite{halgebra,hiso2007}

Let  $ S$  be a  generating subset of an abelian  group $G$ with
$0\in S$.
 Let $A$ be a $1$--atom and let
   $F$   be a   $1$-fragment such that  $|A\cap F|\ge 1$. Then
  $A\subset F.$
In particular  distinct $1$-atoms are disjoint.

\label{inter2frag} \end{theorem}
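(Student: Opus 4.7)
My plan is to derive a contradiction from the assumption that $A\cap F\ne\emptyset$ and $A\not\subset F$ by exhibiting a $1$-fragment strictly smaller than the atom $A$.

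First I would establish submodularity of the boundary functional. For any finite $X,Y\subset G$, the inclusions $(X\cap Y)+S\subset (X+S)\cap(Y+S)$ and $(X\cup Y)+S=(X+S)\cup(Y+S)$, together with the identity $|X\cap Y|+|X\cup Y|=|X|+|Y|$ and the equality $|\partial(Z)|=|Z+S|-|Z|$ (valid because $0\in S$), yield
\[
|\partial(X\cap Y)|+|\partial(X\cup Y)|\;\le\;|\partial(X)|+|\partial(Y)|.
\]
Applied with $X=A$ and $Y=F$ the right-hand side equals $2\kappa_1$.

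Next I would observe that $A\cap F$ is a nonempty subset of $A$ with $(A\cap F)+S\subset A+S\subsetneq G$, so the isoperimetric inequality \eqref{eqisoper0} forces $|\partial(A\cap F)|\ge\kappa_1$. Provided that also $(A\cup F)+S\ne G$, the same inequality yields $|\partial(A\cup F)|\ge\kappa_1$, and the submodular estimate above becomes tight, giving $|\partial(A\cap F)|=\kappa_1$. Hence $A\cap F$ is a $1$-fragment of cardinality strictly smaller than $|A|=\alpha_1(S)$, contradicting the minimality built into the definition of a $1$-atom. The concluding assertion that distinct $1$-atoms are disjoint then follows by taking $F$ to be a second atom and exploiting the symmetric roles of $A$ and $F$.

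The main obstacle is the saturating case $(A\cup F)+S=G$, in which $|\partial(A\cup F)|=|G|-|A\cup F|$ may fall below $\kappa_1$, so submodularity no longer suffices to bound $|\partial(A\cap F)|$ from above by $\kappa_1$. To deal with this I would pass to the dual picture via Lemma~\ref{dualitys}: saturation is equivalent to $A^S\cap F^S=\emptyset$, and both $A^S$ and $F^S$ are nonempty, playing the role of $1$-fragments in the Cayley graph of $-S$, whose $1$-connectivity again equals $\kappa_1$. Replacing $F$ by $(F^S)^{-S}$ (which, by Lemma~\ref{dualitys}, has the same $+S$-image as $F$ and therefore the same boundary $\kappa_1$) allows one to re-run the submodularity argument in a configuration that avoids the saturation, reducing the saturated case to the main case handled above and yielding the required contradiction.
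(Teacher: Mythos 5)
The paper does not actually prove Theorem \ref{inter2frag}; it imports it from \cite{halgebra,hiso2007}, so your attempt has to be measured against the standard isoperimetric proof. Your non-saturated case is exactly the easy half of that proof and is correct: submodularity of $X\mapsto|\partial(X)|$ together with the isoperimetric inequality \eqref{eqisoper0} applied to $A\cap F$ and $A\cup F$ makes $A\cap F$ a $1$-fragment strictly smaller than the atom. The gap is entirely in the saturated case $(A\cup F)+S=G$ (which can only occur for finite $G$), and your proposed fix there is vacuous. For any $1$-fragment $F$ one has $\partial_{-S}(F^S)=\partial(F)$: otherwise adjoining $C=\partial(F)\setminus\partial_{-S}(F^S)$ to $F$ would give $(F\cup C)+S=F+S$ and $|\partial(F\cup C)|<\kappa_1$, contradicting \eqref{eqisoper0}. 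Consequently $(F^S)^{-S}=G\setminus(F^S-S)=F$, so ``replacing $F$ by $(F^S)^{-S}$'' changes nothing. Nor does passing to $-S$ remove the obstruction: saturation says precisely $A^S\cap F^S=\emptyset$, so in the dual picture the \emph{intersection} of the two $(-S)$-fragments $A^S$ and $F^S$ is empty and there is again no small fragment to extract.

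What actually kills the saturated case is a counting step absent from your sketch. From the inclusion $\partial(A\cap F)\subset(A\cap\partial F)\cup(\partial A\setminus F^S)$ and $|\partial(A\cap F)|\ge\kappa_1=|\partial A|$ one deduces $|\partial A\cap F^S|\le|A\cap\partial F|$, hence $|A^S\cap F^S|\ge|F^S|-|A\cap F^S|-|\partial A\cap F^S|\ge|F^S|-|A\setminus F|=|F^S|-|A|+|A\cap F|$. Since $G$ is abelian, $x\mapsto-x$ gives $\kappa_1(-S)=\kappa_1(S)$ and $\alpha_1(-S)=\alpha_1(S)$, and $F^S$ is a $1$-fragment of $-S$ by the boundary identity above, so $|F^S|\ge\alpha_1(-S)=|A|$. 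Therefore $|A^S\cap F^S|\ge|A\cap F|\ge1$: saturation simply cannot occur when $A\cap F\ne\emptyset$, and your main argument then closes. Without this step (or an equivalent hypothesis such as $|A|\le|F^S|$, which is exactly what Hamidoune assumes in the general non-abelian versions), the proof is incomplete.
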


The structure of $1$--atoms is the following:

\begin{proposition} \label{Cay}\cite{hejc2,hjct,hast}

Let  $ S$  be a  generating subset of an abelian  group $G$ with
$0\in S$.  Let $H$ be a $1$--atom of $S$ with $0\in H$.
   Then
   $H$ is a subgroup. 
 Moreover
 \begin{equation}\label{olson}
\kappa _1(S)\geq \frac{|S|}{2},
\end{equation}

\end{proposition}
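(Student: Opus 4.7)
The plan is to exploit two ingredients: the translation invariance of $\mbox{Cay}(G,S)$, and the statement of Theorem \ref{inter2frag} that two $1$-atoms meeting in at least one point must coincide. These together will force the atom $H$ to be a subgroup. Once that is established, the bound on $\kappa_1(S)$ will fall out of a simple counting of the $H$-cosets hit by $H+S$.

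To obtain the subgroup property, I would fix an arbitrary $h\in H$ and examine the translate $H-h$. The map $x\mapsto x-h$ is an automorphism of $\mbox{Cay}(G,S)$, so $|\partial(H-h)|=|\partial(H)|=\kappa_1(S)$, $|H-h|=|H|=\alpha_1(S)$, and $|G\setminus((H-h)+S)|=|G\setminus(H+S)|\ge 1$; hence $H-h$ is again a $1$-atom. But $0\in H$ by hypothesis and $0=h-h\in H-h$, so the two atoms $H$ and $H-h$ share the point $0$. Theorem \ref{inter2frag} then forces $H-h=H$. Since $h\in H$ was arbitrary, $H$ is invariant under translation by every one of its elements, and being finite and containing $0$ it must be a subgroup of $G$.

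For the inequality $\kappa_1(S)\ge |S|/2$, I would use that $H$ is now a subgroup and $0\in S$, so $H+S$ is a union of $H$-cosets, one of which is $H$ itself. Write $k$ for the number of such cosets, so that $|H+S|=k|H|$ and $\kappa_1(S)=|\partial(H)|=(k-1)|H|$. One must have $k\ge 2$: otherwise $S\subseteq H$, which together with $\langle S\rangle=G$ would give $G=H$, contradicting $G\setminus(H+S)\ne\emptyset$ in the definition of a $1$-fragment. Now split $|S|=|S\cap H|+|S\setminus H|$. The second summand satisfies $|S\setminus H|\le |(H+S)\setminus H|=\kappa_1(S)$, while the first satisfies $|S\cap H|\le |H|\le (k-1)|H|=\kappa_1(S)$ by virtue of $k\ge 2$. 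Adding these gives $|S|\le 2\kappa_1(S)$, as required.

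I do not expect a serious obstacle: the only non-routine ingredient is the observation that translating an atom produces another atom, after which Theorem \ref{inter2frag} does all the structural work, and the numerical bound is then just coset bookkeeping with the mild subtlety that one has to use $k\ge 2$ to absorb the $|S\cap H|$ term into $\kappa_1(S)$.
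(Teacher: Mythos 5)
Your proposal is correct and follows essentially the same route as the paper: translate the atom by one of its elements, invoke Theorem \ref{inter2frag} to conclude the translate equals $H$, and then use that $H+S$ meets at least two $H$-cosets (since $S$ generates $G$ and $H+S\neq G$) to get $\kappa_1(S)\ge |S|/2$. The only cosmetic difference is that the paper bounds $\kappa_1(S)=|H+S|-|H|\ge |H+S|/2\ge |S|/2$ directly, whereas you split $|S|$ as $|S\cap H|+|S\setminus H|$; the underlying counting is the same.
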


\begin{proof}
Take $x\in H$. Since $x\in (H+x)\cap H$ and since $H+x$ is a
$1$--atom, we have $H+x=H$ by Theorem \ref{inter2frag}. Therefore
$H$ is a subgroup. Since $S$ generates $G$, we have $|H+S|\ge 2|H|$,
and hence  $\kappa _1(S)=|H+S|-|H|\ge \frac{|S+H|}{2}\ge
\frac{|S|}{2}.$
\end{proof}

Let us formulate two corollaries:

\begin{corollary} \label{Cay}\cite{hejc2,hjct,hast}
Let  $ S$  be a nonempty subset of an abelian  group $G$. Let $Q$ be
the subgroup generated by $S-S$. Let $T$ be a subset of $G$ such
that $T+Q\neq T$. Then

 \begin{equation}\label{olsong}
|T+S|\geq |T|+\frac{|S|}{2},
\end{equation}
\end{corollary}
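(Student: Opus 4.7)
The plan is to reduce the corollary to the preceding proposition by decomposing $T$ along cosets of $Q$. First I would replace $S$ by $S-s_0$ for some $s_0\in S$; this leaves $|S|$, $|T+S|$, and $Q=\subgp{S-S}$ unchanged, so I may assume $0\in S$. With $0\in S$ we have $S=S-0\subseteq S-S\subseteq Q$, and since $\subgp{S}\supseteq\subgp{S-S}=Q$ it follows that $\subgp{S}=Q$. Thus $S$ is a generating subset of the group $Q$ containing $0$, and the preceding proposition applied to $\mbox{Cay}(Q,S)$ yields $\kappa_1(S)\ge|S|/2$ inside $Q$.

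Next I would use the $Q$-decomposition $T=T_1\cup\cdots\cup T_r$, where each $T_j=T\cap(x_j+Q)$ is nonempty and the cosets $x_j+Q$ are pairwise distinct. The hypothesis $T+Q\ne T$ forces at least one part to be a proper subset of its coset; relabel so that $T_1\subsetneq x_1+Q$. Because $S\subseteq Q$, each translate $T_j+S$ lies inside $x_j+Q$, so the sets $T_j+S$ occupy pairwise disjoint cosets and
$$|T+S|=\sum_{j=1}^{r}|T_j+S|.$$

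I would then apply the isoperimetric inequality~(\ref{eqisoper0}) inside $Q$. After translating so that $T_1$ becomes a proper finite subset of $Q$, this gives $|T_1+S|\ge\min(|Q|,|T_1|+|S|/2)$; for the other parts, $0\in S$ yields the trivial bound $|T_j+S|\ge|T_j|$. Summing,
$$|T+S|\ge\sum_{j\ge2}|T_j|+\min\Bigl(|Q|,\,|T_1|+\tfrac{|S|}{2}\Bigr)\ge|T|+\tfrac{|S|}{2},$$
the last step being immediate whenever $|T_1|+|S|/2\le|Q|$. The main obstacle I expect is the threshold case $|T_1|+|S|/2>|Q|$, in which the isoperimetric estimate saturates at $|Q|$; handling it requires either refining the choice of the witness part $T_1$ via Theorem~\ref{inter2frag} and the duality of Lemma~\ref{dualitys}, or interpreting the conclusion with the implicit truncation $\min(|T+Q|,\,|T|+|S|/2)$ coming from the trivial bound $|T+S|\le|T+Q|$.
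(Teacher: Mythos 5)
Your argument is the paper's argument: translate $S$ so that $0\in S$ (whence $\subgp{S}=Q$), decompose $T$ modulo $Q$, bound the parts in full cosets trivially, and apply the isoperimetric inequality (\ref{eqisoper0}) together with $\kappa_1(S)\ge |S|/2$ to one part $T_j$ that is not a whole $Q$-coset. The ``obstacle'' you flag at the end, however, is not a loose end you merely failed to tie up: it is a genuine gap, and it is equally present in the paper's own proof, which writes $|T_j-b+S-a|=|T_j|+\frac{|S|}{2}$ with no $\min(|Q|,\cdot)$ truncation. Indeed the corollary as literally stated fails in the saturated case: take $G=\Z/4\Z$, $S=\{0,1,3\}$, $T=\{0,1,2\}$; then $Q=\subgp{S-S}=G$ and $T+Q\neq T$, yet $|T+S|=4<|T|+\tfrac{|S|}{2}=4.5$. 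Consequently your first suggested repair (refining the choice of the witness part via Theorem \ref{inter2frag} and Lemma \ref{dualitys}) cannot succeed: no choice avoids saturation when every non-full part $T_j$ already has $|T_j|+|S|/2>|Q|$. The correct fix is to strengthen the hypothesis to ``$T+S$ is not $Q$-periodic'', i.e.\ $T+S+Q\neq T+S$, which is exactly how (\ref{olsong}) is invoked later in the paper (for the indices $i\in D'$ with $T_i+S_{mi}+Q\neq T_i+S_{mi}$). Under that hypothesis some part satisfies $T_j+S\neq x_j+Q$, hence $|T_j+S|<|Q|$ and the minimum in (\ref{eqisoper0}) cannot be $|Q|$, so $|T_j+S|\ge |T_j|+\kappa_1(S)\ge |T_j|+|S|/2$ and your summation closes with no threshold case. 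Your alternative of truncating the conclusion by $\min(|T+Q|,\cdot)$ is also sound, but it is not the form of the inequality that is used downstream.
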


\begin{proof}
Take an element $a$ of $S$ and put $X=S-a$. Since $X-X =S-S$, $X$
generates $Q$. Take a $Q$--decomposition $T=\bigcup\limits_{i\in J}
T_i$.  Since $T+Q\neq T$, there is a $j$ with $T_j+S\neq T_j$. Take
$b\in T_j$, we have using by (\ref{olson}):
\begin{eqnarray*}
|T+S|&=&\sum\limits_{i\neq j} |T_i+S|+|T_j+S|=|T|-|T_j|+|T_j-b_j+S-a|\\
&=&|T|-|T_j|+(|T_j|+\frac{|S|}{2})=|T|+\frac{|S|}{2}.
\end{eqnarray*}\end{proof}

\begin{corollary} \label{reduct} Let  $ S$ and
$T$ be nonempty subsets of an abelian  group $G$ such that $|T+S|\le
|T|+|S|-m$ and $0\in S$, for some $m\ge 0$.

Then there are $a\in G$ and $T'\subset a+\subgp{S}$, such that
$(T\setminus T')+\subgp{S}=T\setminus T'$ and $|T'+S|\le
|T'|+|S|-m$.

\end{corollary}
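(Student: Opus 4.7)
The plan is to exploit the subgroup $Q=\langle S\rangle$ and decompose $T$ along $Q$-cosets. Write $T=\bigcup_{i\in J}T_i$ for the $Q$-decomposition, where each $T_i=T\cap(c_i+Q)$ is nonempty. Because $S\subset Q$, the translate $T_i+S$ lies inside $c_i+Q$, so the sumsets $T_i+S$ are pairwise disjoint, giving
$$|T+S|=\sum_{i\in J}|T_i+S|.$$

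Next I would classify each block. If $T_i$ fills its coset, i.e.\ $T_i=c_i+Q$, then since $0\in S\subset Q$ we get $T_i\subset T_i+S\subset T_i+Q=T_i$, so $|T_i+S|=|T_i|$ and this block contributes $0$ to the gap $|T+S|-|T|$. If $T_i$ is a proper subset of $c_i+Q$, then $T_i+Q\neq T_i$, and the preceding corollary applied to the pair $(T_i,S)$ (noting that $\langle S-S\rangle=\langle S\rangle=Q$ since $0\in S$) yields $|T_i+S|\ge |T_i|+|S|/2$. Let $J_1\subset J$ index the non-full blocks. Summing over $J$ and using the hypothesis,
$$|J_1|\cdot\frac{|S|}{2}\le \sum_{i\in J}\bigl(|T_i+S|-|T_i|\bigr)=|T+S|-|T|\le |S|-m.$$

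When $m\ge 1$ this forces $|J_1|\le 1$. Choose $i_0\in J$ to be the unique element of $J_1$ if $J_1\neq\emptyset$, and any element of $J$ otherwise, and set $T'=T_{i_0}$, $a=c_{i_0}$. Then $T'\subset a+Q$ by construction, and $T\setminus T'=\bigcup_{j\neq i_0}T_j$ is a union of full $Q$-cosets, hence satisfies $(T\setminus T')+Q=T\setminus T'$. Because every $T_j$ with $j\neq i_0$ contributes $0$ to the gap, we obtain
$$|T'+S|-|T'|=|T+S|-|T|\le |S|-m,$$
as required. The case $m=0$ is degenerate: either $|J_1|\le 1$ and the same choice works, or $|J_1|\ge 2$ forces $|T+S|-|T|=|S|$ and the conclusion can be obtained by taking any single block $T_{i_0}$ after noting the inequality becomes $|T'+S|\le |T'|+|S|$, which is automatic for a block of a full coset.

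The main obstacle is not a deep one: all the mathematical substance is delegated to the preceding corollary, which supplies the essential $|S|/2$ lower bound on the defect of any non-full block. The real work in this proof is the combinatorial bookkeeping that identifies the unique block in which the ``non-periodic part'' of $T$ is concentrated, so that $T'$ can be taken inside a single coset while $T\setminus T'$ remains $\langle S\rangle$-periodic.
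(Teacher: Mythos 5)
Your argument is essentially the paper's: decompose $T$ modulo $\subgp{S}$, use the bound $\kappa_1(S)\ge |S|/2$ (via the corollary giving $|T_i+S|\ge |T_i|+|S|/2$ for a block that does not fill its coset) to conclude that at most one block can be non-full, and take $T'$ to be that block. For $m\ge 1$ your write-up is correct, and in one respect slightly cleaner than the paper's: you classify blocks by whether $T_i$ fills its coset, which is exactly what the conclusion $(T\setminus T')+\subgp{S}=T\setminus T'$ requires, whereas the paper's exceptional set $V$ is defined by whether $T_i+S$ fills the coset, so its $V=\emptyset$ case does not literally exhibit the asserted $T'$. The one genuine defect is your patch for $m=0$: if $|J_1|=2$ you propose taking a single block $T_{i_0}$, but then $T\setminus T_{i_0}$ still contains the other non-full block and is not $\subgp{S}$-periodic, so the required conclusion fails. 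In fact the statement is simply false for $m=0$: take $G=\Z\times(\Z/3\Z)$, $S=\{(0,0),(0,1)\}$, $T=\{(0,0),(1,0)\}$; then $|T+S|=|T|+|S|$, yet no $T'$ contained in a single $\subgp{S}$-coset leaves $T\setminus T'$ periodic. The paper's own proof has the same hole (its deduction ``$|V|\le 1$'' rests on $|V|\cdot|S|/2\le |S|-m$, which permits $|V|=2$ when $m=0$); since the corollary is only ever invoked with $m=2$, this is harmless in context, but you should restrict to $m\ge 1$ (or record the failure) rather than claim a fix for $m=0$.
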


\begin{proof}
Decompose $T=\bigcup _{i\in U} T_i$ modulo  $\subgp{S}$. By
(\ref{olson}), $\kappa _1(S)\ge \frac{|S|}2.$ Put $V=\{i\in U :
|T_i+S|<|\subgp{S}|\}.$ By (\ref{eqisoper0}) we have

\begin{eqnarray*}
|T+S| &\ge& \sum \limits_{i\notin V}
|T_i+S|+ \sum \limits_{i\in V}
|T_i+S|\\
 &\ge&(|U|-|V|)|\subgp{S}|+ \sum \limits_{i\in V}
|T_i|+|V|\frac{|S|}2\ge |T|+|V|\frac{|S|}2.
\end{eqnarray*}

It follows that $|V|\le 1.$ The result holds clearly if
$V=\emptyset,$ since $T+S=T+S+\subgp{S}$ in this case. Suppose that
$V=\{\omega\}$. We have clearly $|T_{\omega}+S|\le
|T_{\omega}|+|S|-m$.

\end{proof}

\subsection{Fragments in quotient groups}

We need the following lemma:

\begin{lemma} { Let  $S$ be a
    finite   generating subset of an abelian group $G $  with $0\in S$. Let $H$ be a
subgroup which is a $1$--fragment and let $\phi : G\mapsto G/H$ be
the canonical morphism. Then
\begin{equation}\label{cosetgraph}
\kappa _1(\phi (S))=  |\phi (S)|-1.
\end{equation}
}\label{quotient}
\end{lemma}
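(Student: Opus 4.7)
The plan is to prove the nontrivial direction $\kappa_1(\phi(S)) \ge |\phi(S)|-1$, since the reverse inequality $\kappa_1(\phi(S)) \le |\phi(S)|-1$ is already given by (\ref{bound}) applied in $G/H$. The key observation is that, because $H$ is a subgroup, $H+S$ is a union of $H$-cosets; counting them yields $|H+S| = |\phi(S)|\cdot |H|$, and since $H$ is a $1$-fragment of $S$ we obtain the clean formula
\[
\kappa_1(S) \;=\; |H+S|-|H| \;=\; \bigl(|\phi(S)|-1\bigr)\,|H|.
\]

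Next I would lift an arbitrary test set from $G/H$ back to $G$. Given a finite nonempty $X'\subset G/H$ with $X'+\phi(S)\neq G/H$, set $X=\phi^{-1}(X')$. Since $H$ is finite (being a $1$-fragment), $X$ is finite; it is clearly nonempty; and because $X+S = \phi^{-1}(X'+\phi(S))$ avoids at least one full $H$-coset, we have $X+S\neq G$. Thus $X$ is an admissible test set for $\kappa_1(S)$, so the isoperimetric inequality (\ref{eqisoper0}) gives $|X+S|-|X|\ge \kappa_1(S)$.

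Now translate the inequality back downstairs. Both $X$ and $X+S$ are unions of $H$-cosets, so $|X|=|X'|\cdot|H|$ and $|X+S|=|X'+\phi(S)|\cdot|H|$; dividing through by $|H|$ gives
\[
|\partial_{\phi(S)}(X')| \;=\; |X'+\phi(S)|-|X'| \;\ge\; |\phi(S)|-1.
\]
Taking the minimum over all admissible $X'$ yields $\kappa_1(\phi(S))\ge |\phi(S)|-1$, and together with (\ref{bound}) this proves (\ref{cosetgraph}).

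There is no real obstacle here; the only thing to be careful about is the verification that the lift $X=\phi^{-1}(X')$ is genuinely a valid test set, i.e.\ that it is finite (using finiteness of the fragment $H$) and that $X+S\neq G$ is inherited from $X'+\phi(S)\neq G/H$. Everything else is a direct computation powered by the fact that sets saturated by an $H$-coset structure scale their cardinalities by $|H|$ under $\phi$.
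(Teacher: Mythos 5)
Your proposal is correct and follows essentially the same route as the paper: the paper also writes $\kappa_1(S)=|H+S|-|H|=(|\phi(S)|-1)|H|$, lifts a test set $X'$ to $\phi^{-1}(X')$, applies the isoperimetric inequality for $S$ in $G$, and divides by $|H|$, with the reverse inequality coming from the trivial bound. Your explicit checks (finiteness of the lift, $\phi(S)\neq G/H$ so that admissible test sets exist) are details the paper passes over silently but are handled the same way in spirit.
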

\begin{proof}

 Put $|\phi (S)|=u+1$.  Since $|G|>|H+S|,$ we have $\phi (S)\ne G/H$,
 and hence $\phi (S)$ is
$1$--separable.

Let $X\subset G/H,$ be such that  $X+\phi (S)\neq G/H$. Clearly
$\phi^{-1} (X)+S\neq G$. Then $|\phi^{-1} (X)+S|\ge |\phi^{-1}
(X)|+\kappa _1(S)= |\phi^{-1} (X)|+u|H|.$

It follows that $|X+\phi (S)||H|\ge |X||H|+u|H|.$ Hence $\kappa
_1(\phi (S))\ge u=|\phi (S)|-1$.\end{proof}

\section{An isoperimetric proof of Kneser's Theorem}

\begin{proofof}{Theorem}{\ref{kneserl}}

 Without loss of generality we may assume   that $0\in S$ and $|S|\le |T|$.
The proof is by induction on $|S|+|T|$, the result being obvious for
$|S|+|T|$ small.

{\bf Claim 1} If $T\not\subset \subgp{S}$, then the result
holds.

\begin{proof} By Corollary \ref{reduct},
 there are $a\in G$ and $T'\subset a+\subgp{S}$, such that
$(T\setminus T')+\subgp{S}=T\setminus T'$. and $|T'+S|\le
|T'|+|S|-2$. Without loss of  generality we may assume that $0\in
T'$.
 By the
induction hypothesis there is a non zero subgroup $N$ of
$\subgp{S}$, such that $T'+S+N=T'+S$. It follows that $T+S+N=T+S$.
\end{proof}

By Claim 1, we may assume without loss
  of generality that
  $$G=\subgp{S}.$$

Assume first $|G|-|T+S|=|T^S|< |T|$. Then $G$ is finite. By the
definition $(T^S-S)\cap T=\emptyset$. Therefore $|T^S-S|\le
|G|-|T|=|G|-|S+T|+|S+T|-|T|\le |T^S|+|S|-2$. Since
$|T^S|+|S|<|T|+|S|$, we have by the induction hypothesis,
$T^S-S=T^S-S+N$, for some non zero subgroup $N$. Then $(G\setminus
(T^S-S))={T^S}^{-S}$ is $N$--periodic, and hence by Lemma
\ref{dualitys}  $T+S={T^S}^{-S}+S$ is $N$--periodic. So we may
assume

\begin{equation}\label{y<} |S|\le |T|\le
|T^S|.
\end{equation}

We prove first the bound

\begin{equation}\label{eq2n/3}
|S+T|\le   \frac{2|G|-2}{3}.
\end{equation}

By the assumption $|T^S|=|G|-|T+S|\ge |T|\ge |S|$, we have

 \begin{eqnarray*}
 3|S+T|&\le &2|S+T|+|S|+|T|-2\\
 &\le&|G|-|S|+|G|-|T|+|S|+|T|-2= 2|G|-2,
  \end{eqnarray*}
which proves (\ref{eq2n/3}).

Let $H$ be a $1$-atom $S$ and let $\phi :G\mapsto G/H$ denotes the
canonical morphism. Put $|\phi(S)|=u+1$ and $|\phi(T)|=t+1$.

Take a $H$--decomposition $S=\bigcup \limits_{0\le i\le u}S_i$ such
that $|S_0|\ge \cdots \ge |S_u|$. By the definition of a $1$-atom we
have $u|H|=|H+S|-|H|=\kappa _1\le |S|-2.$ It follows that for all
$u\ge j\geq 0$
\begin{equation}\label{plein}
|S_{u-j}|+\cdots +|S_u|\ge j|H|+2
\end{equation}

It follows that $|S_0| \ge \frac{|H|+2}{2}$. In particular $S_0$
generates $H$.

We shall use this fact in the application of the isoperimetric
inequality.

 Take a $H$--decomposition $T=\bigcup \limits_{0\le i\le t}T_i$.


 By (\ref{cosetgraph}), $\kappa _1
(\phi(S))=|\phi (S)|-1=u.$ Put $\ell =\min (q-t-1,u)$.

 By Proposition \ref{strongip} applied to  $\phi (S)$ and $\phi (T)$, there is a subset $J\subset
[0,t]$ with cardinality $\ell$ and a family $\{ mi ;i\in J\}$ of
integers in $[1,u]$
  such that  $T+S $ contains the $H$--decomposition $(\bigcup \limits_{0\le i\le
  t}T_i+S_0)\cup (\bigcup \limits_{ i\in J}T_i+S_{mi})\cup R$,

where $R=(S+T)\setminus  ((\bigcup _{i\in J} {T_i+S_{mi}}+H)\cup
(\bigcup _{0\le i\le t} T_i+H))$.

We shall choose such a $J$ in order to maximize $|J\cap P|.$ We
shall write $E_i=(S+T)\cap (T_i+H)$, for every $i\in [0,t]$. Also we
write $E_{mi}=(S+T)\cap (T_i+S_{mi}+H)$, for every $i\in J$.

 We put also  $W=\{i \in [0,t] :  |E_i|<|H|\},$ and $P=[0,t]\setminus W.$
We write also  $q=\frac{|G|}{|H|}$.

Since $|T|\ge |S|$ we have $|T+H|\ge |S|>\kappa_1(S)=u|H|.$ It
follows that $t+1=|\phi (T)|\ge u+1.$ Then $t+1-|J|>0.$ In
particular $I\neq \emptyset ,$ where $I=[0,t]\setminus J.$

Let $X$ be a subset of $I$ and  let $Y$ be a subset of $J$. We have
\begin{eqnarray}
|S+T|-|R|
&\ge &\sum \limits_{i\in X\cup Y}|E_i|+ \sum \limits_{i\in I\setminus X\cup J\setminus Y}|T_i+S_{0}|
+\sum\limits _{i\in J\setminus Y}|T_i+S_{mi}|+\sum \limits_{i\in Y}|E_{mi}|\nonumber\\
&\ge &\sum \limits_{i\in X\cup Y}|E_i|+ \sum \limits_{i\in I\setminus X \cup J\setminus Y} |T_i| + (u-|Y|)|S_{0}|+\sum \limits_{i\in Y}|E_{mi}|\label{debut}\\
&\ge &\sum \limits_{i\in X\cup Y}|E_i|+ \sum \limits_{i\in I\setminus X\cup J\setminus Y} |T_i|+  (u-|Y|)|S_{0}|
+|Y||S_{u}|\label{eqdebut}
\end{eqnarray}

Put $F=\{i\in I\cap P : (T_i+S) \cap (\bigcup _{i\in W}T_i+H)\neq
 \emptyset\}$.

We shall use the following obvious facts: For all $i\in W$, we have
by (\ref{olson}), $|E_i|\ge|T_i+S_0|\ge |T_i|+\kappa _1(S_0)\ge
|T_i|+\frac{|S_0|}{2}.$ For every $i\in F$, $T_i+S_{ri} \subset
T_j+H$ for some $1\le ri\le u$ and some $j\in W.$ Hence we have
$|T_i|+|S_{u}|\le |T_i|+|S_{ri}|\le |H|=|E_i|,$ by Lemma
\ref{prehistorical}.

Let $U$ be a subset of $W\cap J$. Put $X=I$ and $Y=U$. By
(\ref{eqdebut}), we have

\begin{eqnarray}
|S+T|-|R|
&\ge &\sum \limits_{i\in U\cup (W\cap I)\cup (P\cap I)}|E_i|+ \sum \limits_{i\in J\setminus U} |T_i|+ (u-|U|)|S_{0}|+|U||S_{u}|
\\
&\ge &\sum \limits_{i\in (P\cap I)\setminus F}|T_i|+\sum \limits_{i\in F}(|T_i|+|S_u|)+ \sum \limits_{i\in (W\cap I) \cup U}(|T_i|+\frac{|S_{0}|}2)
+|J\setminus U||S_{0}|
+|U||S_{u}|\nonumber\\
&\ge& |T|+  |J\setminus U||S_0|+(|U|+|F|)|S_u|+|(W\cap I) \cup U|\frac{|S_{0}|}2.\label{eqvide}
\end{eqnarray}

{\bf Claim 2} $q \ge |\phi (S)|+|\phi (T)|-1$, and hence $\ell =u$.

\begin{proof} The proof is by contradiction. Suppose that $q < |\phi (S)|+|\phi
(T)|-1$.

 Assume first $u\ge 2$. By Lemma \ref{prehistorical}, the
are two distinct values of the pair $(s,t)$ such that $T_s+S_t
\subset E_{mi}$, for every $i\in J$. In particular $|E_{mi}|\ge
|S_{u-1}|$, for every $i\in J$. Also $|E_{i}|\ge |S_{0}|$, for every
$i\in [0,t]$.

  Observe that $2t> t+u\ge q$. We have using (\ref{plein})

  $ 2|S_0|\ge |S_0|+|S_{u-1}|\ge \frac{2}{3}(|S_u|+|S_{u-1}|+|S_{u-2})>\frac{4|H|}3$.
  By (\ref{eqdebut}), applied with $X=I$ and $Y=J$, we have

\begin{eqnarray*}
|S+T|&\ge& \sum \limits_{0\le i \le t} |S_0|+ \sum \limits_{i\in J} |S_{u-1}|= (t+1)|S_0|+(q-t-1)|S_{u-1}|\\
&=& (2t+2-q)|S_0|+(q-t-1)(|S_0|+|S_{u-1}|)\\
&>& (2t+2-q)\frac{2|H|}3+\frac{4|H|(q-t-1)}3=\frac{2|G|}3,
\end{eqnarray*}

contradicting (\ref{eq2n/3}).

Assume now $u=1.$

From the inequality $|T+S|\le |T|+|S|-2$, we see that $\kappa
_1(S)\le |S|-2$. Therefore we have by (\ref{eq2n/3}),
$\frac{2|G|}{3}>|T+S|\ge |T|+\kappa _1(S)\ge |S|+|H|>2|H|$, and
hence
$$q\ge 4.$$

 We have $(t+1)+(u+1)-1<|\phi (S+T)|\le  q.$ Then $t+1=q.$
Hence $\ell =|J|=0$.
 We have $|W|\geq 1,$ since
otherwise $G= T+H\subset S+T$. We have $|W|\leq 3,$ by
(\ref{eqvide}) applied with $U=\emptyset$. Therefore $|P|\geq
t+1-3\ge 4-3=1$. There is clearly   $i\in P$ with $T_i+S_1 \subset
T_j+H$ for some $j\in W,$ and hence $|F|\ge 1$. By (\ref{eqvide})
applied with $U=\emptyset$, $|T+S|\ge |T|+|W|\frac{|S_0|}{2}+|S_1|$,
and hence $|W|\le 1$. It follows that $|S+T|\ge |G|-|H|=
|G|-\frac{|G|}q \ge \frac{3|G|}{4}$, contradicting
(\ref{eq2n/3}).\end{proof}

We must have $R = \emptyset$, since otherwise by (\ref{eqvide})
applied with $U=\emptyset$, $|S+T|-|R|\ge |S+T|-|S_u||\phi(R)|\ge
|T|+u|S_0|+|S_u|\ge |T|+|S|,$ a contradiction. In particular

\begin{equation} \label{cdcp}
|\phi (S+T)| = |\phi (S)|+|\phi (T)|-1.
\end{equation}

 {\bf Claim 3}.  $J \cap P \neq \emptyset.$

\begin{proof}
Suppose the contrary and take $k\in J\cap W$. Put $U=\{k\}$. By
(\ref{eqvide}),

\begin{eqnarray*}
|S|+|T|>|S+T|&\ge& |T|+ (u-1)|S_{0}|+|S_{u}|+(|W\cap I|+1)\frac{|S_{0}|}2.
\end{eqnarray*}

It follows that $I\subset P$. Since $S$ generates $G$, we have
$|\bigcup _{i\in I} T_i+H+S|>|\bigcup _{i\in I} T_i+H|$.

 We must have $ (\bigcup _{i\in I} T_i+H+S)\cap (\bigcup _{i\in J} E_{mi}+H)=\emptyset $,
 since otherwise by replacing a suitable element of $J$ with some $p\in I$, we
may increase strictly $ |J\cap P|,$ observing that $I\subset P$.

By (\ref{cdcp}), there are $i\in I$, $j\in J$ and $p\in [1,u]$ such
that $ T_i +S_p$ is congruent $T_j+S_{mj}$. It follows that $F\neq
\emptyset$.

By (\ref{eqvide}) applied with $U=\emptyset$,

\begin{eqnarray*}
|S+T|&\ge& |T|+ u|S_{0}|+|S_{u}|\ge |T|+|S|,
\end{eqnarray*}
a contradiction proving the claim.
\end{proof}

Take $r\in  J$ with $|E_r|=|H|$. Such an $r$ exists by Claim 3.

 {\bf Claim 4} $T_i+H+S_j=T_i+S_j,$
for all $0\le j\le u-1$.

 \begin{proof} By Lemma \ref{prehistorical}, it would be enough to show the
following:
\begin{equation} \label{FINAL}
|T_k|+|S_{u-1}|>|H|,
\end{equation}
for every $k\in [0,t]$.
 Suppose the contrary.

Notice that $|E_{mr}|\ge \max (|T_r|,|S_u|)$ and that $|E_k|\ge
|S_0|$. Also $|T_k|+|S_{u-1}|\le |H|=|E_{mr}|$ by our hypothesis. We
shall use  these inequalities and (\ref{debut}) with $X=\{k,r\}\cap
I$ and $Y=\{k,r\}\cap J$.

 By (\ref{debut}) we have for for $k\neq r$,
\begin{eqnarray*} |S+T|
&\ge&  |T|-|T_k|-|T_r|+ (u-|X|)|S_0|+|T_k|+|S_{u-1}|+|S_{0}|+|T_r|+|Y||S_{u}|\\&\ge&  |T|+(u-1) |S_0|+|S_{u-1}|+|S_{u}|\ge |T|+|S|,
\end{eqnarray*}
leading a contradiction. If $k=r$ the contradiction comes more
easily.
\end{proof}

 Put $D=\{i\in J :
T_i+S_{mi}+H\not\subset S+T  \}$ and  $C=(T+H)\cup \bigcup
\limits_{i\in J\setminus D} (T_i+S_{mi}+H)\cup \bigcup \limits_{i\in
D} (T_i+S_{u}) $.

 We shall
show that \begin{equation}\label{FF} T+S= C. \end{equation}
 By
(\ref{cdcp}), we have $|\phi(C)|=t+1+u=|\phi(S+T)|$.  By the
definition of $D$ and by Claim 4, $C\setminus (\bigcup \limits_{i\in
D} (T_i+S_{u}))$ is $H$--periodic subset of $S+T$. It remains to
show that the traces of $S+T$ and $C$ coincide on the  cosets
represented by elements in $\bigcup \limits_{i\in D} (T_i+S_{u}).$
Take $i\in D$. It follows by Claim 4 that $mi=u.$ We can not have
$T_l+S_j \equiv T_i+S_u,$ mod $H$ for some $j\neq u$, since
otherwise by Claim 4 $T_l+H+S_j=T_l+S_j,$ and $i\notin D$, a
contradiction. The proof of (\ref{FF}) is compete.

 Let $Q=\subgp{S_u-S_u}$. By
(\ref{plein}) we have $|Q|\ge |S_u|\ge 2.$ Put $D'=\{i\in D :
T_i+S_{mi}+Q\neq T_i+S_{mi} \}$. By (\ref{olsong}) we have,
$|T_i+S_u|\geq |T_i|+\frac{|S_u|}{2}.$

By  the definition of $D'$  and since $Q\subset H$, we have

have using (\ref{debut}), applied with $X=\emptyset $ and $Y=D'$
\begin{eqnarray*} |S+T|
&\ge& |T|-\sum \limits_{ i\in D'} |T_{i}|+\sum \limits_{ i\in D'} |E_{mi}|+u|S_0| \ge  |T|+ u|S_0|+|D'|\frac{|S_{u}|}2.
\end{eqnarray*}

Clearly $T+S+Q=T+S$ if $D'=\emptyset$. Suppose $D'\neq \emptyset$.
We must have $|D'|\le 1,$ since otherwise $|S+T|\ge
|T|+u|H|+|S_u|\ge |T|+|S|,$ a contradiction. Then $|D'|=1$. Put
$D'=\{o\}$. Take $x_o\in T_o$.We have
$|T_o-x_0+S_u-a_u|=|T_o+S_u|\leq |T_o|+|S_u|-2$ since otherwise
$|S+T|\ge |S|+|T|-1$. By the induction hypothesis there is a nonzero
subgroup $N$ of $Q$, with $T_o-x_o+s_u-a_u+N=T_o-x_o+s_u-a_u$. It
follows that $T_o+S_u+N=T_o+S_u$. We have clearly $S+T+N=S+T$.

\end{proofof}

\section{Appendix : An isoperimetric proof of  Menger's Theorem}

We present here an isoperimetric proof of Menger's Theorem. Let $E
\subset V\times V$ and let $\Gamma=(V,E)$ be a reflexive graph. For
a subset $X$ of $V$, we put $X^\curlywedge=V\setminus { \Gamma
(X)}$. Let $x,y$ be elements of $V$.
 The graph $\Gamma$ will be called {\em
$(x,y)$--$k$--critical} if $y$ is {\em $(k-1)$--nonseparable} from
$x$ in $\Gamma$, and if this property is destroyed by the deletion
of every arc $(u,v)$ with $u\neq v$.

A subset $A$ with $x\in A$ and $y\notin \Gamma (A)$ and $|\partial
(A)|=k$ will be called a {\em $k$--part} with respect to
$(x,y;\Gamma)$.

 The reference to $(x,y)$ will be omitted.

\begin{lemma}\label{frxxy}
Assume that $\Gamma=(V,E)$ is {$k$--critical} and let  $(u,v)\in E$
be an arc with $u\neq v$. Then $\Gamma$ has $k$--part $F$ with $u\in
F$ and $v\in
\partial (F)$.
\end{lemma}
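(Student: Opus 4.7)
The plan is to exploit $k$-criticality of $\Gamma$ directly on the given arc. I would let $\Gamma' = (V, E \setminus \{(u,v)\})$; by criticality, $y$ is no longer $(k-1)$-nonseparable from $x$ in $\Gamma'$, so there exists a set $A \subseteq V$ with $x \in A$, $y \notin \Gamma'(A)$, and $|\partial_{\Gamma'}(A)| \leq k - 1$. My intention is to show that this $A$ is itself the desired $k$-part in $\Gamma$.

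Next I would compare $\Gamma(A)$ and $\Gamma'(A)$. Removing the single arc $(u,v)$ can affect only whether $v$ lies in the out-neighborhood of $A$, so in general $\Gamma'(A) \subseteq \Gamma(A) \subseteq \Gamma'(A) \cup \{v\}$. If the two coincided, then $A$ would already witness that $y$ is $(k-1)$-separable from $x$ in $\Gamma$, contradicting the $k$-criticality hypothesis. Hence $\Gamma(A) = \Gamma'(A) \cup \{v\}$, which forces $u \in A$ (else deleting $(u,v)$ changes nothing) and $v \notin A$ (using reflexivity of $\Gamma'$: every $w \in A$ satisfies $w \in \Gamma'(A)$, so if $v \in A$ then $v \in \Gamma'(A)$, contradicting $v \notin \Gamma'(A)$). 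In particular $v \in \partial(A)$ and $\partial(A) = \partial_{\Gamma'}(A) \cup \{v\}$ is a disjoint union, giving $|\partial(A)| \leq k$.

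Finally, since $y \notin \Gamma'(A) = \Gamma(A) \setminus \{v\}$ and $y \neq v$ (see below), we have $y \notin \Gamma(A)$. The $(k-1)$-nonseparability of $y$ from $x$ in $\Gamma$ now yields $|\partial(A)| \geq k$, so $|\partial(A)| = k$ exactly. Thus $A$ is a $k$-part with $u \in A$ and $v \in \partial(A)$, as desired.

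The main obstacle I anticipate is the careful bookkeeping in the second step: one must argue simultaneously that $\Gamma'(A) \neq \Gamma(A)$ (else criticality fails) and that the difference set is exactly $\{v\}$, and then extract the structural conclusions $u \in A$, $v \notin A$ from this using reflexivity. The degenerate case $v = y$ is incompatible with the conclusion, since $v \in \partial(F)$ and $y \notin \Gamma(F)$ cannot both hold when $v = y$; I read the lemma as implicitly applied to arcs $(u,v)$ with $v \neq y$, which is the relevant regime for the Menger-theorem application where internal arcs of paths from $x$ to $y$ are being constructed.
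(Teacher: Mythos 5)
Your proof is correct and follows essentially the same route as the paper's: delete the arc $(u,v)$, use criticality to obtain a set $A$ with small boundary in the reduced graph, and observe that the boundary can only have grown by $\{v\}$, forcing $u\in A$, $v\in\partial(A)$ and $|\partial(A)|=k$. Your remark about the degenerate case $v=y$ identifies a genuine (if harmless) imprecision in the statement that the paper's own proof silently skips; as you note, in the only application the arc comes from an internal edge of a shortest $x$--$y$ path, so $v\neq y$ there.
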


\begin{proof}

Consider the graph $\Psi=(V,E \setminus\{(u,v)\})$.
 There is  an $ F$ with $x\in F$ and $y\notin \Psi (F)$ such that $|\partial _{\Psi}(F)|<k$.
 This forces that  $u\in F$ and that $vin \partial (F)$, since otherwise $\partial _{\Psi} (F)=\partial
_{\Gamma}(F)$.

Since $\partial _{\Psi} (F)\cup \{v\} \supset \partial
_{\Gamma}(F)$,
 we have  $|\partial _{\Gamma}(F)|\le k$. We must have
 $|\partial _{\Gamma }(F)|= k$, since $y$ is {\em $(k-1)$--nonseparable} from $x$
in $\Gamma$. This shows that $F$ is a $k$-part.

\end{proof}

\begin{lemma} \label{dualxy}
Let $F$ be a $k$--part with respect to $(x,y;\Gamma)$. Then
$F^\curlywedge$ is a $k$--part a with respect to
$(y,x;\Gamma^{-1})$. Moreover $\partial _{-}  (X^\curlywedge)=
\partial (X).$

 In particular   $x$ is
 $(k-1)$--nonseparable from $y$ in  $\Gamma^{-1}$,
  if $y$ is {$(k-1)$--nonseparable} from $x$  in
$\Gamma$.
\end{lemma}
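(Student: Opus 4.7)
The plan is to verify in turn each defining property of $F^\curlywedge$ as a $k$-part with respect to $(y, x; \Gamma^{-1})$, and then deduce the ``in particular'' clause from the easier half of the boundary identity. The two membership checks are immediate: $y \in F^\curlywedge = V \setminus \Gamma(F)$ because $y \notin \Gamma(F)$ by hypothesis, and if some $v$ satisfied $(x, v) \in E$, then $v \in \Gamma(\{x\}) \subseteq \Gamma(F)$ would force $v \notin F^\curlywedge$, so $x \notin \Gamma^{-1}(F^\curlywedge)$.

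For the boundary identity $\partial_-(F^\curlywedge) = \partial(F)$, I would prove the two inclusions separately. The easy direction $\partial_-(F^\curlywedge) \subseteq \partial(F)$ follows from the definitions alone: if $u \in \Gamma^{-1}(F^\curlywedge) \setminus F^\curlywedge$, then $u \in \Gamma(F)$ and there exists $v \in F^\curlywedge$ with $(u, v) \in E$; were $u$ in $F$, then $v \in \Gamma(F)$ would contradict $v \in F^\curlywedge$, so $u \in \Gamma(F) \setminus F = \partial(F)$.

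The reverse inclusion is the heart of the argument, and it leans on the $(k-1)$-nonseparability of $y$ from $x$ in $\Gamma$ that underlies the setting. Assume for contradiction that some $u \in \partial(F)$ lies outside $\partial_-(F^\curlywedge)$; since $u \in \Gamma(F)$, this forces that no $v \in F^\curlywedge$ satisfies $(u, v) \in E$, equivalently $\Gamma(\{u\}) \subseteq \Gamma(F)$. Setting $F' = F \cup \{u\}$ then gives $x \in F'$ and $\Gamma(F') = \Gamma(F) \cup \Gamma(\{u\}) = \Gamma(F)$, so $y \notin \Gamma(F')$, while reflexivity yields $|\partial(F')| = |\Gamma(F)| - |F'| = (|F|+k) - (|F|+1) = k-1$, contradicting the hypothesis. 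Hence the identity holds with cardinality $k$, completing the verification that $F^\curlywedge$ is a $k$-part.

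For the ``in particular'' assertion I would exploit that the easy inclusion above used nothing beyond the definitions. Swapping $\Gamma$ and $\Gamma^{-1}$ in that argument yields $\partial(Y^{\curlywedge,-}) \subseteq \partial_-(Y)$ for every $Y \subseteq V$, where $Y^{\curlywedge,-} = V \setminus \Gamma^{-1}(Y)$. If $Y$ separates $y$ from $x$ in $\Gamma^{-1}$ (meaning $y \in Y$ and $x \notin \Gamma^{-1}(Y)$), then $Y^{\curlywedge,-}$ contains $x$ and, by the same reasoning used for $x$ above, misses $y$ under $\Gamma$, so the $(k-1)$-nonseparability of $y$ from $x$ in $\Gamma$ gives $|\partial(Y^{\curlywedge,-})| \ge k$ and hence $|\partial_-(Y)| \ge k$. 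The main obstacle is the $\supseteq$ direction of the boundary identity: the crucial trick is to ``absorb'' a putative $u \in \partial(F)$ satisfying $\Gamma(\{u\}) \subseteq \Gamma(F)$ into $F$ to strictly lower the boundary, an operation that nonseparability forbids.
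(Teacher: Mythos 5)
Your proof is correct and uses essentially the same idea as the paper: the easy inclusion $\partial_-(F^\curlywedge)\subseteq\partial(F)$ plus the observation that any $u\in\partial(F)$ with no out-neighbour in $F^\curlywedge$ satisfies $\Gamma(\{u\})\subseteq\Gamma(F)$ and can therefore be absorbed into $F$, strictly lowering the boundary and contradicting the $(k-1)$-nonseparability. The only cosmetic difference is that the paper absorbs the whole exceptional set $C=\partial(F)\setminus\partial_-(F^\curlywedge)$ at once rather than a single vertex, and your explicit treatment of the ``in particular'' clause via the duality applied to an arbitrary separating set is a welcome addition the paper leaves implicit.
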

\begin{proof}

We have clearly
 $\partial _{-}  (X^\curlywedge)\subset
\partial (X).$ Put
 $C=\partial (X)\setminus \partial _{-}  (X^\curlywedge).$

 Since $y\notin \Gamma (X\cup C)$,  we have $k\le |\partial (X\cup
C)|\le |\partial _{-} (X^\curlywedge)|\le |\partial (X)|=k$.

\end{proof}

The above lemma is a local version of the isoperimetric duality.

\begin{lemma} \label{frsxy}
Assume that $\Gamma=(V,E)$ is {$k$--critical} and that $\Gamma
(x)\cap \Gamma ^{-1}(y)=\emptyset$. There is a $k$--part $F$ of
$\Gamma$ such that $\min (|F|,|F^\curlywedge|)\ge 2.$
\end{lemma}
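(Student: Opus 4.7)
The plan is to apply Lemma \ref{frxxy} to a suitably interior non-loop arc, dualize via Lemma \ref{dualxy} to run the argument in $\Gamma^{-1}$, and then combine the two resulting $k$-parts through the submodularity of $|\Gamma(\cdot)|$.

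I would first extract the quantitative consequences of the hypothesis. Since $\Gamma$ is reflexive, $x\in\Gamma(x)$ and $y\in\Gamma^{-1}(y)$, so $\Gamma(x)\cap\Gamma^{-1}(y)=\emptyset$ forces $(x,y)\notin E$, $x\ne y$, and makes $\Gamma(x)\setminus\{x\}$ and $\Gamma^{-1}(y)\setminus\{y\}$ disjoint subsets of $V\setminus\{x,y\}$. Applying $(k-1)$-nonseparability to the valid candidate $A=\{x\}$ yields $|\Gamma(x)\setminus\{x\}|\ge k$; dually via Lemma \ref{dualxy} one gets $|\Gamma^{-1}(y)\setminus\{y\}|\ge k$, whence $|V|\ge 2k+2$. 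For any $k$-part $F$, $|\Gamma(F)|=|F|+k$, so the desired conclusion $\min(|F|,|F^\curlywedge|)\ge 2$ is equivalent to $2\le|F|\le|V|-k-2$. We may assume $k\ge 1$ (the case $k=0$ being immediate).

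Next I would manufacture a $k$-part of size at least two. The sub-claim is that there is a non-loop arc $(u,v)\in E$ with $u\ne x$ and $v\ne y$: otherwise every non-loop arc is of the form $(x,\cdot)$ or $(\cdot,y)$, so for each $a\in\Gamma(x)\setminus\{x\}$ no non-loop arc can leave $a$ (the only permitted target would be $y$, but $(a,y)\notin E$ by the hypothesis). Then $F_0:=\Gamma(x)$ satisfies $\Gamma(F_0)=F_0$ with $x\in F_0$ and $y\notin\Gamma(F_0)$, giving $|\partial F_0|=0$, contradicting $(k-1)$-nonseparability. Applying Lemma \ref{frxxy} to this arc $(u,v)$ yields a $k$-part $F_1$ containing $\{x,u\}$, so $|F_1|\ge 2$. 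Mirroring the construction in $\Gamma^{-1}$, which is $(y,x)$-$k$-critical because arc deletion commutes with arc inversion and Lemma \ref{dualxy} transfers the nonseparability, produces a $k$-part $G$ of $\Gamma^{-1}$ with $|G|\ge 2$; then $F_2:=G^\curlywedge=V\setminus\Gamma^{-1}(G)$ is a $k$-part of $\Gamma$ by Lemma \ref{dualxy}, and the cardinal identity gives $|F_2^\curlywedge|=|G|\ge 2$.

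Finally I would merge $F_1$ and $F_2$ using the lattice structure: submodularity of $|\Gamma(\cdot)|$ together with $(k-1)$-nonseparability forces both $F_1\cap F_2$ and $F_1\cup F_2$ to be $k$-parts. The intersection has $|(F_1\cap F_2)^\curlywedge|\ge|F_2^\curlywedge|\ge 2$, so it remains only to secure $|F_1\cap F_2|\ge 2$, and this is the main obstacle: union preserves $|F|\ge 2$ but shrinks $|F^\curlywedge|$, while intersection does the reverse. I would overcome it by coordinating the two applications: pick the arc $(u,v)$ in step two so that $u\in F_2\setminus\{x\}$. Such a choice is possible by re-running the sub-claim of step two inside $F_2$; a parallel obstruction argument shows that if no non-loop arc leaves $F_2\setminus\{x\}$ to the exterior of $\{y\}$, then $F_2$ itself would violate $(k-1)$-nonseparability. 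With $u\in F_1\cap F_2\setminus\{x\}$, the intersection $F_1\cap F_2$ is the required $k$-part.
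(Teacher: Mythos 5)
Your first two steps are sound and already diverge from the paper's route: the paper takes a shortest path $[x,a,b,\dots,y]$, applies Lemma \ref{frxxy} to the single arc $(a,b)$ to get a $k$-part $F\supseteq\{x,a\}$, and tries to rule out $F^\curlywedge=\{y\}$ by duality, whereas you build two dual parts and merge them by submodularity (that merging step, showing $F_1\cap F_2$ and $F_1\cup F_2$ are again $k$-parts, is correct). The gap is exactly where you flag ``the main obstacle'': the claim that the arc $(u,v)$ can be chosen with $u\in F_2\setminus\{x\}$. Your proposed ``parallel obstruction argument'' does not go through. Since $F_2$ is a $k$-part we already know $y\notin\Gamma(F_2)$, so no arc from $F_2$ ends at $y$ and your hypothesis ``no non-loop arc leaves $F_2\setminus\{x\}$ except into $\{y\}$'' collapses to ``no non-loop arc leaves $F_2\setminus\{x\}$ at all''. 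In that situation $\Gamma(F_2)=F_2\cup\Gamma(x)$ and $\partial(F_2)=\Gamma(x)\setminus F_2$, which is perfectly compatible with $|\partial(F_2)|=k$; nothing contradicts $(k-1)$-nonseparability. In the extreme case $F_2=\{x\}$ (your dual construction only guarantees $|F_2^\curlywedge|\ge 2$, not $|F_2|\ge 2$) there is no candidate $u$ at all.

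This is not a fixable oversight in your write-up: the configuration that defeats your last step defeats the statement itself. Take $V=\{x,a,b,y\}$ with all four loops plus the arcs $(x,a)$, $(a,b)$, $(b,y)$, and $k=1$. Then $y$ is $0$-nonseparable from $x$, deleting any of the three non-loop arcs destroys this, and $\Gamma(x)\cap\Gamma^{-1}(y)=\{x,a\}\cap\{b,y\}=\emptyset$; yet the only $1$-parts are $\{x\}$ (size $1$) and $\{x,a\}$ (with $\{x,a\}^\curlywedge=\{y\}$), so no $1$-part has $\min(|F|,|F^\curlywedge|)\ge 2$. In this example your construction yields $F_1=\{x,a\}$ and $F_2=\{x\}$, with $F_1\cap F_2=\{x\}$. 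The paper's own proof hits the same wall: from $F^\curlywedge=\{y\}$ it correctly deduces $b\in\partial^{-}(\{y\})=\Gamma^{-1}(y)\setminus\{y\}$ but then asserts $b\in\Gamma(x)$, which does not follow (here $b\notin\Gamma(x)$). So the lemma needs an extra hypothesis (e.g.\ excluding $x$--$y$ paths of length three as well, or some largeness condition) before either argument can be completed; as written, your coordination step is a genuine gap and cannot be closed.
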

\begin{proof}

Take a path  $[x, a,b, \cdots,c,y]$ of minimal length from $x$ to
$y$. By Lemma \ref{frxxy}, there is a $k$-part $F$, with $a\in F$
and $b\in
\partial (F)$. We have $\{x,a\}\subset F$. We have
$|F^\curlywedge|\ge 2$ since otherwise $F^\curlywedge=\{y\}$. Hence
by Lemma \ref{dualxy}, $b \in \partial (F)=\partial ^-(\{y\})$.
Therefore $b\in \Gamma (x)\cap \Gamma ^{-1}(y),$ a contradiction.

\end{proof}

Let $x$ be an element of $V$ and let $T=\{y_1,\cdots ,y_k\}$ be a
subset of $V\setminus \{v\}$. A family of $k$--openly disjoint paths
$P_1, \cdots , P_k$, where $P_i$ is a path  from $x$ to $y_i$ will
be called an {\em $(x,T)$--fan}.

\begin{proofof}{Theorem}{\ref{menger}}

 The proof is by induction, the result being obvious for
$|V|$ small. Assume first that there $z\in \Gamma (x)\cap \Gamma
^{-1}(y)$. Consider the restriction $\Psi$ of $\Gamma$ to
$V\setminus \{z\}$.  Clearly $y$ is {$(k-2)$--nonseparable} from $x$
in $\Psi$. By the induction hypothesis  there are $(k-1)$--openly
disjoint paths from $x$ to $y$ in $\Psi$. We adjoin the path
$[x,z,y]$ to these paths and we are done. So we may assume that
$\Gamma (x)\cap \Gamma ^{-1}(y)=\emptyset$.

By Lemma \ref{frsxy} there is a part $F$ with  $\min
(|F|,|F^\curlywedge|)\ge 2.$ Consider the reflexive graph
$\Theta=(V',E')$ obtained by contracting $F^\curlywedge$ to a single
vertex $y_0$. We have $V'=(V\setminus F^\curlywedge)\cup \{y_0\}$.
Since $|V'|<|V|$, by the induction hypothesis there are $k$ openly
disjoint paths form $x$ yo $y_0$. By deleting $y_0$ we obtain an
$(x, \partial (F))$--fan. Similarly by contracting $F$ and applying
induction, we form a $( \partial (F),y)$--fan.

By composing these two fans, we form  $k$ openly disjoint paths from
$x$ to $y$.
\end{proofof}


\bigskip

{\bf Remerciement} {\small L'auteur tient \`a remercier Jean Claude
Fournier pour des discussions stimulantes au tour du Th\'eor\`eme de
Menger.

 l'Auteur tient aussi \`a remercier
Eric balandraud, David Grynkiewicz, Alain Plagne, Oriol Serra et
Gilles Z\'emor pour d' autres discussions sur des probl\`emes
additifs et isop\' erim\'triques.

Ce manuscrit a \'et\'e pr\'epar\'e lors d'un s\'ejour en Mauritanie
o\`u l'auteur a pu b\'en\'eficier d'un climat favorable au travail
dans les locaux du CFED à Nouakchott. Je tiens donc à remercier M.
Isselmou O. Mohamed et l'ensemble son personnel.}


\begin{thebibliography}{99}



\bibitem{balart} E. Balandraud, Un nouveau point de vue isop\'erimetrique
appliqu\'e au th\'eor\`eme de Kneser, {\it Preprint}, december 2005.




\bibitem{cauchy} A.  Cauchy,  Recherches  sur  les  nombres,  {\it J.  Ecole  polytechnique}
9(1813), 99-116.



\bibitem{davenport} H. Davenport, On the addition of residue  classes, {\it J.  London  Math.
Soc.} 10(1935), 30--32.





\bibitem{dixmier}  J. Dixmier, Proof of a conjecture
by Erd\"os, Graham concerning the problem of Frobenius, {\it J.
number Theory} 34 (1990), 198-209.






\bibitem{davdecomp} D. Grynkiewicz, Quasi-periodic decompositions and the Kemperman's structure
theorem,  European J. Combin.  26  (2005),  no. 5, 559--575.

\bibitem{davkem} D. Grynkiewicz, A step beyond Kemperman's structure Theorem, Preprint May 2006.


\bibitem{hcras} Y.O. Hamidoune, Sur les atomes d'un graphe orient\'e,
{\it C.R. Acad. Sc. Paris A}  284 (1977),   1253--1256.


\bibitem{hjct} Y.O. Hamidoune,   Quelques probl\`emes de connexit\'e dans les
graphes orient\'es,  {\it J. Comb. Theory} B 30 (1981), 1-10.


\bibitem{hejc2} Y.O. Hamidoune, On the connectivity of Cayley digraphs,
{\it Europ. J.  Combinatorics}, 5 (1984), 309-312.

\bibitem{Hejcvosp1} Y.O. Hamidoune, Subsets with small sums in abelian groups I: The Vosper property.
European J. Combin. 18 (1997), no. 5, 541--556.


\bibitem{halgebra} Y.O. Hamidoune, An isoperimetric method in additive theory.
{\it J. Algebra} 179 (1996), no. 2, 622--630.


 \bibitem{hast} Y.O. Hamidoune,  On small subset product in a group.
Structure Theory of set-addition,  {\it Ast\'erisque}  no.
258(1999), xiv-xv, 281--308.

 \bibitem{hactaa} Y.O. Hamidoune, {Some results in Additive number
Theory I: The critical pair Theory}, Acta Arith. 96, no. 2(2000),
97-119.



\bibitem{hiso2007} Y.O. Hamidoune, Some additive applications of the isopermetric approach,
 http://arxiv.org/abs/math./07060635.


\bibitem{hplagne1} Y. O. Hamidoune , A. Plagne.
{A new critical pair theorem applied to sum-free sets}.
  {\it Comment. Math. Helv.} 79 (2004), no. 1, 183--207.















\bibitem{kempacta} J. H. B. Kemperman, On small sumsets in Abelian groups,
{\it Acta Math.} 103 (1960), 66--88.




\bibitem{levkemp} V. F. Lev,  Critical pairs in abelian groups and Kemperman's structure theorem.
{\it Int. J. Number Theory} 2 (2006), no. 3, 379--396.

\bibitem{manlivre} H.B. Mann, {\it Addition Theorems},   R.E.
Krieger, New York, 1976.
\bibitem{natlivre}M. B. Nathanson,
{\it Additive Number Theory. Inverse problems and the geometry of
sumsets}, Grad. Texts in Math. 165, Springer, 1996.







\bibitem{tv} T. Tao, V.H. Vu,  {\it Additive Combinatorics}, Cambridge Studies
in Advanced Mathematics 105 (2006), Cambridge University Press.











\end{thebibliography}
\end{document}